\newtheorem{theorem}{Theorem}[section]
\newtheorem{lemma}[theorem]{Lemma}
\theoremstyle{definition}
\newtheorem{example}[theorem]{Example}
\theoremstyle{remark}
\newtheorem{remark}[theorem]{Remark}
\numberwithin{equation}{section}
\begin{document}

\title{A Geometric Application for the $det^{S^2}$ Map}

\author{Mihai D. Staic}
\address{Department of Mathematics and Statistics, Bowling Green State University, Bowling Green, OH 43403 }
\address{Institute of Mathematics of the Romanian Academy, PO.BOX 1-764, RO-70700 Bu\-cha\-rest, Romania.}
\email{mstaic@bgsu.edu}

\author{Jacob Van Grinsven}
\address{Department of Mathematics and Statistics, Bowling Green State University, Bowling Green, OH 43403 }
\email{jdvangr@bgsu.edu}



\subjclass[2010]{Primary 15A15, Secondary 18G60}


\keywords{determinants, linear dependence, hyperdeterminant}

\begin{abstract}
We discuss properties of the $det^{S^2}$ map, present a few explicit computations, and give a geometrical interpretation for the condition $det^{S^2}((v_{i,j})_{1\leq i<j\leq 4})=0$. 
\end{abstract}

\maketitle

\section{Introduction} 

The determinant of an $n\times n$ matrix has countless applications in mathematics, so it is not surprising that attempts to generalize it have been made over the years.  A modern approach to this problem is presented  in \cite{gkz} where the notion of hyperdeterminant is discussed in detail, but similar ideas can be traced back to Cayley's work. 

The $det^{S^2}$  map (for a vector space of dimension $d=2$) was introduced in \cite{sta2} as the unique multilinear function that satisfies a certain universality property. This map does not seem to fit in the setting of hyperdeterminants  described in  \cite{gkz}, but just like the classical determinant, the $det^{S^2}$ map is associated to an exterior algebra "`like"' construction (more precisely an exterior GSC-operad). The construction of the GSC operad was inspired by results on Higher Hochschild homology (\cite{cs}, \cite{p}), and the Swiss-Cheese Operad (\cite{vo}). Even though the operad construction exists for  vector spaces of any dimension, the  existence of  the $det^{S^2}$  map  is not known for dimension $d>2$ (but it was conjectured in \cite{sta2}).

In this paper we outline a series of properties and results  about $det^{S^2}$. In section 2 we recall a few notations and formulas from \cite{sta2}. In section 3 we show that $det^{S^2}$ is invariant under the action of the group $SL_2(k)$, and under the action of the symmetric group  $S_4$. We also give some explicit computations with geometrical flavor. 

In section 4 we present an analog of the  well known fact that the determinant of a square matrix vanishes if and only if the column vectors are linearly dependent. For $(v_{i,j})_{1\leq i<j\leq 4}$ a collection of six vectors, we describe the relationship between $\det^{S^2}((v_{i,j})_{1\leq i<j\leq 4})$ vanishing and the existence of a quadrilateral $Q_1Q_2Q_3Q_4$ with edges $\overrightarrow{Q_iQ_j}$ a multiple of $v_{i,j}$. 
We conclude the paper with a few remarks and examples.

\section{Preliminaries} 
 
In this paper $k$ is a field with $char(k)=0$, and $V$ is a $k$-vector space of dimension $2$. For geometrical applications we will take $k=\mathbb{R}$. Some of the results presented here work also over a commutative ring. 

The following convention was used in \cite{sta2} to represent an element from $V^6$. Consider $v_{i,j}\in V$ for all $1\leq i <j\leq 4$,  we denote
$$\mathfrak{V}=(v_{i,j})_{1\leq i<j\leq 4}=\begin{pmatrix}
0 & v_{1,2}& v_{1,3}& v_{1,4}\\
 & 0 & v_{2,3}& v_{2,4}\\
 & &0& v_{3,4}\\
 & & &0
\end{pmatrix} \in V^6. $$
This notation is convenient when we want to keep track of the positions of the elements in $\mathfrak{V}$. 
The zeros on the diagonal of $\mathfrak{V}$ do not play any role, they are mostly for symmetry and help keep track of rows and columns. When there is no danger of confusion we will use the notation $\mathfrak{V}$ for a generic element in $V^6$. 

Notice that we have an natural action of the symmetric group $S_4$ on $V^6$ given by 
$$\sigma\cdot\begin{pmatrix}
0 & v_{1,2}& v_{1,3}& v_{1,4}\\
 & 0 & v_{2,3}& v_{2,4}\\
 & &0& v_{3,4}\\
 & & &0
\end{pmatrix}=\begin{pmatrix}
0 & v_{\sigma(1),\sigma(2)}& v_{\sigma(1),\sigma(3)}& v_{\sigma(1),\sigma(4)}\\
 & 0 & v_{\sigma(2),\sigma(3)}& v_{\sigma(2),\sigma(4)}\\
 & &0& v_{\sigma(3),\sigma(4)}\\
 & & &0
\end{pmatrix}
$$
with the conventions that  $v_{i,j}=v_{j,i}$ for $i>j$.

We recall from \cite{sta2} the formula for the map $det^{S^2}:V^6\to k$. For $v_{i,j}=(\alpha_{i,j},\beta_{i,j})\in k^2$ we have
\begin{eqnarray*} &det^{S^2}\begin{pmatrix} 
0& (\alpha_{1,2},\beta_{1,2})&(\alpha_{1,3},\beta_{1,3})&(\alpha_{1,4},\beta_{1,4})\\
&0&(\alpha_{2,3},\beta_{2,3})&(\alpha_{2,4},\beta_{2,4})\\
& &0&(\alpha_{3,4},\beta_{3,4})\\
&&&0
\end{pmatrix}=&\\
&\alpha_{1,2}\alpha_{2,3}\alpha_{3,4}\beta_{1,3}\beta_{2,4}\beta_{1,4}+
\alpha_{1,2}\beta_{2,3}\alpha_{3,4}\beta_{1,3}\beta_{2,4}\alpha_{1,4}+
\alpha_{1,2}\beta_{2,3}\beta_{3,4}\alpha_{1,3}\alpha_{2,4}\beta_{1,4}&\\
&+\beta_{1,2}\beta_{2,3}\alpha_{3,4}\alpha_{1,3}\alpha_{2,4}\beta_{1,4}+
\beta_{1,2}\alpha_{2,3}\beta_{3,4}\beta_{1,3}\alpha_{2,4}\alpha_{1,4}+
\beta_{1,2}\alpha_{2,3}\beta_{3,4}\alpha_{1,3}\beta_{2,4}\alpha_{1,4}&\\
&-\beta_{1,2}\beta_{2,3}\beta_{3,4}\alpha_{1,3}\alpha_{2,4}\alpha_{1,4}-
\beta_{1,2}\alpha_{2,3}\beta_{3,4}\alpha_{1,3}\alpha_{2,4}\beta_{1,4}-
\beta_{1,2}\alpha_{2,3}\alpha_{3,4}\beta_{1,3}\beta_{2,4}\alpha_{1,4}&\\
&-\alpha_{1,2}\alpha_{2,3}\beta_{3,4}\beta_{1,3}\beta_{2,4}\alpha_{1,4}-
\alpha_{1,2}\beta_{2,3}\alpha_{3,4}\alpha_{1,3}\beta_{2,4}\beta_{1,4}-
\alpha_{1,2}\beta_{2,3}\alpha_{3,4}\beta_{1,3}\alpha_{2,4}\beta_{1,4}.&
\end{eqnarray*}
This formula was obtained from a exterior algebra "like" construction. Essentially, $det^{S^2}$ is a the unique nontrivial multilinear map defined on $V^6$ which has the property that $det^{S^2}\begin{pmatrix}
0 & v_{1,2}& v_{1,3}& v_{1,4}\\
 & 0 & v_{2,3}& v_{2,4}\\
 & &0& v_{3,4}\\
 & & &0
\end{pmatrix}=0$  if there exists $1\leq i<j<k\leq 4$ such that $v_{i,j}=v_{i,k}=v_{j,k}$. We refer to \cite{sta2} for more details. 

\begin{remark} One should notice that the $det^{S^2}$ map does not coincide with any of the hyperdeterminant  maps discussed in \cite{gkz}. Due to dimension restrictions, the only possible candidate is the hyperdeterminant of multidimensional the matrices of type $2\times 2\times 3$ (see \cite{gkz} page 463). However if in the formula of the hyperdeterminant we take $a_{0,0,0}=a_{1,0,1}=a_{0,1,1}=a_{1,1,2}=1$ and all of the other entries $a_{i,j,k}=0$ then $Det(A)=-1$, while our $det^{S^2}$ map is automatically $0$ if only four of the twelve scalar entries in $\mathfrak{V}$ are nonzero. 
\end{remark}

\section{Properties and Computations}

In this section we establish some basic properties of $det^{S^2}$, and give a few  explicit computations. Most of these results  can be checked by direct computation, but they also follow from Lemma \ref{lemma1}.

With the notations from the previous section we have the following
\begin{lemma} 
(a) If $T:V\to V$ is a linear map then $$det^{S^2}\begin{pmatrix}
0 & T(v_{1,2})& T(v_{1,3})& T(v_{1,4})\\
 & 0 & T(v_{2,3})& T(v_{2,4})\\
 & &0& T(v_{3,4})\\
 & & &0
\end{pmatrix}=det(T)^3det^{S^2}\begin{pmatrix}
0 & v_{1,2}& v_{1,3}& v_{1,4}\\
 & 0 & v_{2,3}& v_{2,4}\\
 & &0& v_{3,4}\\
 & & &0
\end{pmatrix}.$$ In particular $det^{S^2}$ is invariant under the action of the group $SL_2(k)$. \\
(b) For any $\sigma\in S_4$ we have
$$det^{S^2}\left(\sigma\cdot\begin{pmatrix}
0 & v_{1,2}& v_{1,3}& v_{1,4}\\
 & 0 & v_{2,3}& v_{2,4}\\
 & &0& v_{3,4}\\
 & & &0
\end{pmatrix}\right)=sgn(\sigma)det^{S^2}\begin{pmatrix}
0 & v_{1,2}& v_{1,3}& v_{1,4}\\
 & 0 & v_{2,3}& v_{2,4}\\
 & &0& v_{3,4}\\
 & & &0
\end{pmatrix}.$$
\end{lemma}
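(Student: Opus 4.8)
The plan is to verify both statements by a combination of structural reduction and direct (but organized) computation, relying on the explicit formula for $det^{S^2}$ recalled above. For part (a), the first observation is that $det^{S^2}$ is multilinear in the six slots $v_{i,j}$, so it suffices to prove the identity when $T$ ranges over a spanning set of $\mathrm{End}(V)$, or better, over a set of generators of the monoid $\mathrm{End}(V)$ under composition together with scalars. Since $\det(ST)^3 = \det(S)^3\det(T)^3$, the claim is multiplicative in $T$, so I would reduce to checking it on a convenient list of generators: the scalar maps $\lambda\cdot\mathrm{id}$ (for which both sides obviously scale by $\lambda^{18}$ versus $\lambda^6\cdot\lambda^{12}$ — note each monomial in the formula has total degree $6$ in the $\alpha,\beta$), the diagonal map $\mathrm{diag}(\lambda,1)$, and the elementary transvections $\begin{pmatrix}1 & t\\ 0 & 1\end{pmatrix}$ and its transpose. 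For $\mathrm{diag}(\lambda,1)$ one inspects the formula: every monomial contains exactly three $\alpha$-factors and three $\beta$-factors (this is the key combinatorial feature of the formula and worth stating explicitly), so replacing each $\alpha_{i,j}$ by $\lambda\alpha_{i,j}$ multiplies the whole expression by $\lambda^3 = \det(\mathrm{diag}(\lambda,1))^3$. The transvection case is the only one requiring genuine expansion: substitute $\alpha_{i,j}\mapsto \alpha_{i,j}+t\beta_{i,j}$ (leaving $\beta_{i,j}$ fixed) and check that all terms of positive degree in $t$ cancel, leaving exactly the original polynomial; since $\det$ of a transvection is $1$, this is what part (a) predicts.

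Alternatively — and this is probably cleaner to present — I would appeal to a general principle: $det^{S^2}$ is a multilinear form on $V^6$ of a very rigid type, and the map $T\mapsto det^{S^2}((Tv_{i,j}))/det^{S^2}((v_{i,j}))$, wherever defined, must be a polynomial character of $GL(V)=GL_2(k)$, hence of the form $\det(T)^m$ for some integer $m$; the exponent $m$ is then pinned down to be $3$ by testing on a single map, e.g. $T=\lambda\cdot\mathrm{id}$ (giving $\lambda^6$ on the left from the degree-$6$ homogeneity, so $m=3$) or $T=\mathrm{diag}(\lambda,1)$. The second sentence of (a), invariance under $SL_2(k)$, is then immediate since $\det(T)=1$ there. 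I expect the main obstacle to be purely bookkeeping: carrying out the transvection substitution in the twelve-term formula cleanly, or, in the slicker approach, justifying rigorously that the character is polynomial and defined on all of $GL_2$ (one should note $det^{S^2}$ is not identically zero, so the quotient makes sense on a Zariski-dense set and extends).

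For part (b), I would again use multilinearity to reduce to checking the sign rule on generators of $S_4$, namely the adjacent transpositions $(12),(23),(34)$, since $sgn$ is multiplicative and so is the composite action. For each such transposition $\tau$ one permutes the six indices $\{(1,2),(1,3),(1,4),(2,3),(2,4),(3,4)\}$ accordingly — e.g. $(34)$ fixes $(1,2)$, swaps $(1,3)\leftrightarrow(1,4)$, swaps $(2,3)\leftrightarrow(2,4)$, and fixes $(3,4)$ — and checks directly from the explicit formula that the twelve-term polynomial changes sign. This is three finite verifications; each amounts to exhibiting an involution on the set of twelve monomials that pairs each monomial with $-1$ times its image, or simply observing that the substitution sends the displayed sum to its own negative. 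Once the three generators are handled, the general $\sigma$ follows because both sides are multiplicative in $\sigma$: $det^{S^2}(\sigma\tau\cdot\mathfrak V) = sgn(\sigma)\,det^{S^2}(\tau\cdot\mathfrak V) = sgn(\sigma)sgn(\tau)\,det^{S^2}(\mathfrak V)$. The main obstacle here is again only organizational — keeping the index relabelling straight for each generator — and I would remark that, conceptually, both (a) and (b) are forced by the universality property characterizing $det^{S^2}$ (the defining vanishing condition $v_{i,j}=v_{i,k}=v_{j,k}$ is $GL(V)$-equivariant and $S_4$-equivariant), so the content of the lemma is really just identifying the two scalar factors $\det(T)^3$ and $sgn(\sigma)$, which one does by evaluating on a single example in each case.
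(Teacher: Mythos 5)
Your proposal is correct, and it is a genuinely more structured argument than the paper's official proof, which is the single sentence ``this follows by direct computations'' (together with the earlier remark that the lemma also follows from Lemma \ref{lemma1}). Your reduction of (a) to generators is sound: every one of the twelve monomials in the defining formula contains exactly three $\alpha$-factors and three $\beta$-factors, which settles the diagonal maps $\mathrm{diag}(\lambda,\mu)$, the transvection case is a finite cancellation check, and since both sides of (a) are polynomial in the entries of $T$ and $k$ is infinite, verification on $GL_2(k)$ extends to all of $\mathrm{End}(V)$. One small slip: for $T=\lambda\cdot\mathrm{id}$ both sides scale by $\lambda^{6}$ (degree-$6$ homogeneity on the left, $\det(\lambda\,\mathrm{id})^{3}=\lambda^{6}$ on the right), not by $\lambda^{18}$; you state this correctly later when pinning down $m=3$. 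A more substantive caveat concerns your ``slicker'' variant: the quotient $det^{S^2}((Tv_{i,j}))/det^{S^2}((v_{i,j}))$ is not a priori independent of $(v_{i,j})$, so declaring it a character of $GL_2$ presupposes what is to be proved; this is repaired, as you indicate at the end, by the uniqueness property --- $(v_{i,j})\mapsto det^{S^2}((Tv_{i,j}))$ is multilinear and satisfies the defining vanishing condition, hence equals $c(T)\,det^{S^2}$ for a scalar $c(T)$, and $c$ is then a multiplicative polynomial function on $GL_2$, i.e.\ $\det^{m}$, with $m=3$ determined on scalars. The same uniqueness argument disposes of (b) up to identifying the sign on a single transposition, and your generator-by-generator check of $(12),(23),(34)$ is an equally valid finite verification (the left/right action convention is immaterial since $sgn$ takes values in an abelian group). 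Finally, note that Lemma \ref{lemma1} yields the quickest proof of (a): replacing every $v_{i,j}$ by $Tv_{i,j}$ turns the displayed $6\times 6$ matrix into $\mathrm{diag}(T,T,T)$ times the original, so its determinant scales by $\det(T)^{3}$; this is presumably the shortcut the authors had in mind.
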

\begin{proof} This follows by direct computations.  
\end{proof}

\begin{example}
Let $P_1$, $P_2$, $P_3$, $P_4$ be four points in the plane $\mathbb{R}^2$, and take  $v_{i,j}=\overrightarrow{P_iP_j}$ then 
$$det^{S^2}\begin{pmatrix}
0 & v_{1,2}& v_{1,3}& v_{1,4}\\
 & 0 & v_{2,3}& v_{2,4}\\
 & &0& v_{3,4}\\
 & & &0
\end{pmatrix}=0.$$
The converse of this example will be discussed in detail in the next section. \label{example1}
\end{example}

\begin{example}
Let $L_1,L_2,L_3,L_4$ be lines  in $\mathbb{R}^2$ passing through the origin, with $\theta_i$ the angle between the positive $x$-axis and $L_i$. If $v_{i,j}=\begin{pmatrix}
\cos(\theta_j-\theta_i)\\
\sin(\theta_j-\theta_i)\\
\end{pmatrix}$ then $$det^{S^2}\begin{pmatrix}
0 & v_{1,2}& v_{1,3}& v_{1,4}\\
 & 0 & v_{2,3}& v_{2,4}\\
 & &0& v_{3,4}\\
 & & &0
\end{pmatrix}=\sin(\theta_2-\theta_1)\sin(2(\theta_3-\theta_2))\sin(\theta_4-\theta_3).$$ 
Notice that because $det^{S^2}$ is a geometric invariant the result depends only on the angle between the $L_{i}$ and $L_{i+1}$, so if we take $\phi_i=\theta_{i+1}-\theta_i$ we have 
$$det^{S^2}\begin{pmatrix}
0 & v_{1,2}& v_{1,3}& v_{1,4}\\
 & 0 & v_{2,3}& v_{2,4}\\
 & &0& v_{3,4}\\
 & & &0
\end{pmatrix}=\sin(\phi_1)\sin(2\phi_2)\sin(\phi_3).$$
\end{example}

One can also check the following description for $det^{S^2}$.
\begin{remark}
Take  $(v_{i,j})_{1\leq i<j\leq 4}\in V^6$, and let $\langle \_,\_\rangle$ be the standard inner product on $\mathbb{R}^2$. Then
\begin{equation*}
    \begin{split}
        det^{S^2}((v_{i,j})_{1\leq i<j\leq 4})=&
        det(v_{1,4},v_{2,4})\langle v_{1,2},v_{3,4}\rangle\langle v_{1,3},v_{2,3}\rangle\\
        &+det(v_{3,4},v_{1,4})\langle v_{1,3},v_{2,4}\rangle\langle v_{1,2},v_{2,3}\rangle\\
        &+det(v_{2,4},v_{3,4})\langle v_{1,4},v_{2,3}\rangle\langle v_{1,2},v_{1,3}\rangle.\\
    \end{split}
\end{equation*}
Where $det(v,w)$ is the determinant of the $2\times2$ matrix with $v$ and $w$ as columns. In particular, we have $$det^{S^2}\begin{pmatrix}
0 & v_1& v_2& v_3\\
 & 0 & v_3^\perp& v_2^\perp\\
 & &0& v_1^\perp\\
 & & &0
\end{pmatrix}=0$$
for any $v_1,v_2,v_2\in \mathbb{R}^2$ and $v_i^\perp$ any non-zero vector orthogonal to $v_i$.
\end{remark}

\section{Main Result}
In this section we present the converse of the Example \ref{example1}. First we need the following lemma. 

\begin{lemma} Let $(v_{i,j})_{1\leq i<j\leq 4}\in V^6$, then 
$$det^{S^2}((v_{i,j})_{1\leq i<j\leq 4})=det\begin{pmatrix}
\alpha_{1,2} & \alpha_{2,3} & 0 & -\alpha_{1,3} & 0 &0\\
\beta_{1,2} & \beta_{2,3} & 0 &-\beta_{1,3} & 0&0\\
\alpha_{1,2} & 0 & 0 & 0 & \alpha_{2,4} & -\alpha_{1,4}\\
\beta_{1,2} & 0 & 0 & 0 & \beta_{2,4} & -\beta_{1,4}\\
0 & 0 & \alpha_{3,4} & \alpha_{1,3} & 0 &-\alpha_{1,4}\\
0 & 0 & \beta_{3,4} & \beta_{1,3} & 0 &-\beta_{1,4}
\end{pmatrix}.$$

\label{lemma1}
\end{lemma}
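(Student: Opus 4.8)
The plan is to verify the identity by direct expansion, but organized so that the $6\times 6$ determinant is computed via a block/Laplace expansion rather than brute force. The key structural observation is that the $6\times 6$ matrix on the right is built from three $2\times 6$ blocks, each of which has nonzero entries in only four columns, and each block encodes a pair of two-dimensional vectors. Concretely, rows $1$--$2$ involve $v_{1,2}, v_{2,3}, -v_{1,3}$ in columns $1,2,4$; rows $3$--$4$ involve $v_{1,2}, v_{2,4}, -v_{1,4}$ in columns $1,5,6$; rows $5$--$6$ involve $v_{3,4}, v_{1,3}, -v_{1,4}$ in columns $3,4,6$. I would first set up the Laplace expansion of the determinant along the three pairs of rows $\{1,2\}$, $\{3,4\}$, $\{5,6\}$: the determinant equals a signed sum over ways of partitioning the six columns into three pairs, one pair assigned to each row-block, of products of three $2\times 2$ minors. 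Most such partitions contribute zero because of the sparsity pattern, so only a handful of terms survive.

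Next I would enumerate the surviving column-partitions. Row-block $\{1,2\}$ can only use columns from $\{1,2,4\}$, row-block $\{3,4\}$ only from $\{1,5,6\}$, and row-block $\{5,6\}$ only from $\{3,4,6\}$. Since columns $3$ and $4$ must be covered, and only the third block reaches column $3$ while only blocks $1$ and $3$ reach column $4$, one quickly narrows down the admissible assignments to a short list (essentially the ways of distributing the "shared" columns $1$, $4$, $6$ among the blocks that can see them). For each admissible partition I would write down the corresponding product of three $2\times 2$ determinants in the $\alpha$'s and $\beta$'s, track the Laplace sign (the sign of the permutation sending the chosen column order to $1,2,3,4,5,6$), and then expand each $2\times 2$ minor into its two monomials.

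Then I would collect all resulting degree-six monomials and match them term-by-term against the twelve monomials in the displayed formula for $det^{S^2}$ from Section 2. This is the bookkeeping core of the proof; I expect each of the three surviving families of partitions to reproduce four of the twelve monomials, with the signs working out exactly. As an alternative route that avoids some of this, I could instead start from the inner-product/$2\times 2$-determinant expression in the Remark preceding the lemma — i.e. the three-term sum $det(v_{1,4},v_{2,4})\langle v_{1,2},v_{3,4}\rangle\langle v_{1,3},v_{2,3}\rangle + \cdots$ — and recognize each summand as a Laplace cofactor contribution, since $\langle v,w\rangle$ and $det(v,w)$ both appear naturally when expanding $2\times 2$ minors of stacked vectors; but this still requires checking the sign conventions line up.

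The main obstacle is purely combinatorial-bookkeeping: correctly determining which column-partitions survive and, above all, getting every Laplace sign right, since a single sign error in one of the surviving terms would break the identity. There is no conceptual difficulty — the content is entirely in the careful expansion — so the proof in the paper is presumably just the remark "this follows by direct computation," and my write-up would either reproduce the Laplace expansion in enough detail to make the sign tracking checkable, or simply note that expanding both sides into monomials and comparing yields the claim.
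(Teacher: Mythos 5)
Your approach is correct and is essentially the paper's own proof, which simply states that the identity follows by direct computation; your Laplace expansion along the row pairs $\{1,2\},\{3,4\},\{5,6\}$ is a clean way to organize that computation. One small correction to your anticipated bookkeeping: only two column-partitions survive (namely $\{1,2\},\{5,6\},\{3,4\}$ and $\{2,4\},\{1,5\},\{3,6\}$, both with Laplace sign $+1$), so the determinant equals $\det(v_{1,2},v_{2,3})\det(v_{1,4},v_{2,4})\det(v_{3,4},v_{1,3})+\det(v_{1,3},v_{2,3})\det(v_{1,2},v_{2,4})\det(v_{1,4},v_{3,4})$, which expands to sixteen monomials of which two pairs cancel, leaving exactly the twelve terms of $det^{S^2}$ rather than three families of four.
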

\begin{proof}
It follows by direct computations. 
\end{proof}

We are now ready to prove the main result of this note. 
\begin{theorem} Take $V=\mathbb{R}^2$, and let $(v_{i,j})_{1\leq i<j\leq 4}\in V^6$, then the following are equivalent.\\ 
(a) $det^{S^2}\begin{pmatrix}
0 & v_{1,2}& v_{1,3}& v_{1,4}\\
 & 0 & v_{2,3}& v_{2,4}\\
 & &0& v_{3,4}\\
 & & &0
\end{pmatrix}=0$.\\
(b) There exist points $Q_1$, $Q_2$, $Q_3$, $Q_4$ in the plane $\mathbb{R}^2$, and $\lambda_{i,j}\in\mathbb{R}$ for  $1\leq i<j\leq 4$ not all zero such that $\lambda_{i,j}v_{i,j}=\overrightarrow{Q_iQ_j}$. 
\label{th1}
\end{theorem}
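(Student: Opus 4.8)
The plan is to exploit the determinant formula from Lemma \ref{lemma1}. Statement (b) says that the homogeneous linear system expressing ``the vectors $\lambda_{i,j}v_{i,j}$ form a closed quadrilateral'' has a nonzero solution in the $\lambda_{i,j}$'s (together with the coordinates of one vertex, say $Q_1$, which may be fixed to $0$ since only the $\overrightarrow{Q_iQ_j}$ matter). The natural closure relations are the three triangle conditions $\overrightarrow{Q_1Q_2}+\overrightarrow{Q_2Q_3}=\overrightarrow{Q_1Q_3}$, $\overrightarrow{Q_1Q_2}+\overrightarrow{Q_2Q_4}=\overrightarrow{Q_1Q_4}$, and $\overrightarrow{Q_1Q_3}+\overrightarrow{Q_3Q_4}=\overrightarrow{Q_1Q_4}$, i.e.
$$
\lambda_{1,2}v_{1,2}+\lambda_{2,3}v_{2,3}-\lambda_{1,3}v_{1,3}=0,\quad
\lambda_{1,2}v_{1,2}+\lambda_{2,4}v_{2,4}-\lambda_{1,4}v_{1,4}=0,\quad
\lambda_{3,4}v_{3,4}+\lambda_{1,3}v_{1,3}-\lambda_{1,4}v_{1,4}=0.
$$
Writing each $v_{i,j}=(\alpha_{i,j},\beta_{i,j})$ and reading off coordinates, this is exactly a system of six linear equations in the six unknowns $(\lambda_{1,2},\lambda_{2,3},\lambda_{3,4},\lambda_{1,3},\lambda_{2,4},\lambda_{1,4})$ whose coefficient matrix is precisely the $6\times 6$ matrix appearing in Lemma \ref{lemma1}. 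Hence by Lemma \ref{lemma1} its determinant equals $det^{S^2}(\mathfrak{V})$, and the system has a nonzero solution if and only if that determinant vanishes.

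So I would first verify carefully that the $6\times6$ matrix of Lemma \ref{lemma1} is literally the coefficient matrix of the three displayed vector equations (keeping track of which row corresponds to which equation and coordinate, and confirming the sign pattern matches). Then (a) $\Rightarrow$ (b): if $det^{S^2}(\mathfrak{V})=0$ the matrix is singular, so there is a nonzero $(\lambda_{i,j})$ satisfying the three closure relations; define $Q_1=0$, $Q_2=\lambda_{1,2}v_{1,2}$, $Q_3=\lambda_{1,3}v_{1,3}$, $Q_4=\lambda_{1,4}v_{1,4}$, and the three relations guarantee $\overrightarrow{Q_iQ_j}=\lambda_{i,j}v_{i,j}$ for all six pairs. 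Conversely (b) $\Rightarrow$ (a): given such $Q_i$ and $\lambda_{i,j}$ (not all zero), the vectors $\overrightarrow{Q_iQ_j}$ automatically satisfy the three triangle identities, so $(\lambda_{i,j})$ is a nonzero solution of the homogeneous system, forcing the coefficient matrix to be singular and hence $det^{S^2}(\mathfrak{V})=0$.

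The one subtlety to address is the ``not all zero'' clause in (b) versus ``nonzero solution of the linear system.'' For (a) $\Rightarrow$ (b) this is immediate: a nonzero kernel vector has some $\lambda_{i,j}\neq 0$. For (b) $\Rightarrow$ (a) one must check that a nonzero choice of the $\lambda_{i,j}$ in (b) genuinely produces a nonzero vector in the kernel of the $6\times6$ matrix — which it does, since the $\lambda_{i,j}$ themselves are the coordinates of that kernel vector and are not all zero by hypothesis. I expect the main (though still routine) obstacle to be the bookkeeping in matching the Lemma \ref{lemma1} matrix to the closure system column-by-column and sign-by-sign; once that identification is pinned down, both implications are one line each. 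I would also add a remark that the three vertices $Q_2,Q_3,Q_4$ are then determined (given $Q_1$) and that the three chosen triangle relations imply the fourth, so no further compatibility check is needed.
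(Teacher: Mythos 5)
Your proof is correct and follows essentially the same route as the paper: both reduce condition (b) to the singularity of a homogeneous linear system in the $\lambda_{i,j}$ and identify the relevant determinant with $det^{S^2}$ via Lemma \ref{lemma1}. The only difference is that you discard the redundant fourth triangle relation $\lambda_{2,3}v_{2,3}+\lambda_{3,4}v_{3,4}-\lambda_{2,4}v_{2,4}=0$ at the outset (it is the alternating sum of the other three), so you work directly with a square $6\times 6$ system, whereas the paper keeps all four relations, obtains an $8\times 6$ matrix, and then uses the row dependences among $\{R_1,R_3,R_5,R_7\}$ and $\{R_2,R_4,R_6,R_8\}$ to reduce to the same $6\times 6$ minor --- your version is a small but genuine streamlining of the same argument.
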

\begin{proof}
First notice that assuming $(b)$, from  Example \ref{example1} we know that 
$$det^{S^2}\begin{pmatrix}
0 & \lambda_{1,2}v_{1,2}& \lambda_{1,3}v_{1,3}& \lambda_{1,4}v_{1,4}\\
 & 0 & \lambda_{2,3}v_{2,3}& \lambda_{2,4}v_{2,4}\\
 & &0& \lambda_{3,4}v_{3,4}\\
 & & &0
\end{pmatrix}=0.$$
However we cannot conclude $(a)$ since since we don't know that all $\lambda_{i,j}$ are nonzero. 

Regardless, condition $(b)$ is satisfied if and only if there is a non-trivial solution $(\lambda_{i,j})_{1\leq i<j\leq 4}$ to the system of vector equations
\begin{eqnarray}
\begin{cases}
\lambda_{1,2}v_{1,2}+\lambda_{2,3}v_{2,3}-\lambda_{1,3}v_{1,3}=0\\
\lambda_{1,2}v_{1,2}+\lambda_{2,4}v_{2,4}-\lambda_{1,4}v_{1,4}=0\\
\lambda_{1,3}v_{1,3}+\lambda_{3,4}v_{3,4}-\lambda_{1,4}v_{1,4}=0\\
\lambda_{2,3}v_{2,3}+\lambda_{3,4}v_{3,4}-\lambda_{2,4}v_{2,4}=0.
\end{cases} \label{equation2}
\end{eqnarray}
This induces a system of linear equations described by 
\begin{equation}
    \begin{pmatrix}
\alpha_{1,2} & \alpha_{2,3} & 0 & -\alpha_{1,3} & 0 &0\\
\beta_{1,2} & \beta_{2,3} & 0 &-\beta_{1,3} & 0&0\\
\alpha_{1,2} & 0 & 0 & 0 & \alpha_{2,4} & -\alpha_{1,4}\\
\beta_{1,2} & 0 & 0 & 0 & \beta_{2,4} & -\beta_{1,4}\\
0 & 0 & \alpha_{3,4} & \alpha_{1,3} & 0 &-\alpha_{1,4}\\
0 & 0 & \beta_{3,4} & \beta_{1,3} & 0 &-\beta_{1,4}\\
0 & \alpha_{2,3} &\alpha_{3,4} & 0 & -\alpha_{2,4} & 0\\
0 & \beta_{2,3} & \alpha_{3,4} & 0 & -\beta_{2,4} & 0
\end{pmatrix}\begin{pmatrix}
\lambda_{1,2}\\
\lambda_{2,3}\\
\lambda_{3,4}\\
\lambda_{1,3}\\
\lambda_{2,4}\\
\lambda_{1,4}
\end{pmatrix}=0.\label{equation1}
\end{equation}
And so, condition $(b)$ is equivalent to the above matrix being rank deficient (i.e. rank $5$ or smaller).

Denote the row vectors in the matrix from equation \ref{equation1} as $\{R_i\}_{1\leq i\leq 8}$. The matrix is rank deficient if and only if the submatrices formed by each choice of $6$ rows from $\{R_i\}_{1\leq i\leq 8}$ results in a zero determinant. Notice the set $\{R_1,R_3,R_5,R_7\}$ is linearly dependent. This is true about the set $\{R_2,R_4,R_6,R_8\}$ as well. This shows that the only possibility of a non-zero 6 by 6 determinant arises if we choose $3$ rows from each of the two sets $\{R_1,R_3,R_5,R_7\}$ and $\{R_2,R_4,R_6,R_8\}$.

We want to show that in order to compute its rank we can chose the first six rows of the above matrix. Suppose we have a submatrix with 3 rows from $\{R_1,R_3,R_5,R_7\}$ and 3 rows from $\{R_2,R_4,R_6,R_8\}$. If $R_1$ is not chosen as one of the rows then $R_3,R_5,R_7$ must be chosen, and the linear dependence relation on the set $\{R_1,R_3,R_5,R_7\}$ gives a sequence of elementary row operations that replaces the row $R_7$ with $R_1$. The same argument allows us to replace $R_7$ with whichever row of the set $\{R_1,R_3,R_5\}$ is missing. A similar argument on the set $\{R_2,R_4,R_6,R_8\}$ means we only need to consider the rows $R_2,R_4,R_6$ as being rows in our submatrix. Thus the determinant of any such matrix is a nonzero multiple the determinant of the matrix consisting of rows $\{R_1,R_2,R_3,R_4,R_5,R_6\}$. Lemma \ref{lemma1} now shows that the determinant of this matrix is given by $det^{S^2}\begin{pmatrix}
0 & v_{1,2}& v_{1,3}& v_{1,4}\\
& 0 & v_{2,3}& v_{2,4}\\
& &0& v_{3,4}\\
& & &0
\end{pmatrix}$.

Thus the $6\times 8$ matrix from equation \ref{equation1} is rank deficient if and only if $det^{S^2}\begin{pmatrix}
0 & v_{1,2}& v_{1,3}& v_{1,4}\\
& 0 & v_{2,3}& v_{2,4}\\
& &0& v_{3,4}\\
& & &0
\end{pmatrix}=0$, which shows the equivalence of $(a)$ and $(b)$. 
\end{proof}

\begin{remark} Geometrically, Theorem \ref{th1} says that the vectors $v_{i,j}$ give the directions of all diagonals in a quadrilateral if and only if $det^{S^2}((v_{i,j})_{1\leq i<j\leq 4})=0$. 
\end{remark}

\begin{remark}
It should be noted that different values of $det^{S^2}$ can come from different orderings of the same 6 vectors.  The orbit of $(v_{i,j})_{1\leq i<j\leq 4}$  under the action of the group $S_4$ is bounded above by $|S_4|=24$, but the total number of arrangements of 6 vectors is $720$. Thus we cannot relax condition $(b)$ in Theorem \ref{th1} to be $\{\lambda_{i,j}v_{i,j}\;\vert\;1\leq i<j\leq 4\}=\left\{\overrightarrow{Q_iQ_j} \;\vert\; 1\leq i<j\leq 4\right\}$ as sets.

For example if $v_{1,2}=v_{2,3}=v_{1,3}=e_1$ and $v_{1,4}=v_{2,4}=v_{3,4}=e_2$ then $\lambda_{1,2}=\lambda_{2,3}=\lambda_{1,3}=0$ and $\lambda_{1,4}=\lambda_{2,4}=\lambda_{3,4}=1$ is a solution for the system \ref{equation2}. On the other hand if $w_{1,2}=w_{2,3}=w_{3,4}=e_1$ and $w_{1,3}=w_{2,4}=w_{1,4}=e_2$ we have that 
$det^{S^2}\begin{pmatrix}
0 & w_{1,2}& w_{1,3}& w_{1,4}\\
& 0 & w_{2,3}& w_{2,4}\\
& &0& w_{3,4}\\
& & &0
\end{pmatrix}=1$ and so the equation \ref{equation2} has no non-trivial solution. Even though $\{v_{i,j}\;\vert\;1\leq i<j\leq 4\}=\{w_{i,j}\;\vert\;1\leq i<j\leq 4\}$ as sets, the $det^{S^2}$ map takes different values on the corresponding elements in $V^6$. 
\end{remark}

\begin{remark} From a homological perspective, Theorem \ref{th1} can be seen as studying  relations among relations. More precisely, if we denote $R_{i,j,k}=\lambda_{i,j}v_{i,j}+\lambda_{j,k}v_{j,k}-\lambda_{i,k}v_{i,k}$, then we have already imposed  the condition $R_{1,2,3}-R_{1,2,4}+R_{1,3,4}-R_{2,3,4}=0$. This is very similar with Cayley's theory of elimination (see Appendix B in \cite{gkz}). 
It is natural to ask if this idea can be used to define a $det^{S^2}$ map for any finite dimensional vector space $V$. 
\end{remark}



\bibliographystyle{amsalpha}

\end{document}